\newtheorem{theorem}{Theorem}
\newtheorem{lemma}[theorem]{Lemma}
\newtheorem{prop}[theorem]{Proposition}
\newtheorem{definition}[theorem]{Definition}
\newtheorem{remark}{Remark}
\newtheorem{example}[theorem]{Example}
\begin{document}
\begin{center}
ON LAPLACE TRANSFORM ON  SEMILATTICES
\end{center}

\

\begin{center}
A. R. Mirotin
\end{center}

\

\begin{flushright}
\textit{Dedicated to the memory of Yu. I. Lyubich }
\end{flushright}

\

\section*{Abstract} The aim of this work is  to prove inverse formulas for Laplace transform on semilattices of open-and-compact sets in a both discrete and non-discrete cases. These are partial answers to a question posed by  Yu.~I.~Lyubich.

\medskip

{\bf 2020 MSC:} Primary 22A20, 22A26

{\bf Keywords:} Idempotent semigroup, semicaracter, Laplace transform, inverse formula, semilattice, topological semilattice.
\bigskip

\

\section{Introduction}

Recall that a semicharacter of a semigroup  $S$ with unit $e$ is any bounded complex function  on $S$ that satisfies the relations
\begin{equation}\label{semichar}
\psi(xy)=\psi(x)\psi(y),\quad \psi(e)=1.
 \end{equation}

We denote by $\widetilde{S}$ the semigroup of all  semicharacters of $S$.

The  definition of the Laplace transform of a bounded Radon measure $\mu$ on $S$ due to N. Bourbaki   and looks as follows
 \begin{equation}\label{generalLaplace}
\widetilde{\mu}(\psi)=\int_S \psi(g)\,d\mu(g)
 \end{equation}
where  $\psi\in \widetilde{S}$ is continuous \cite[Chapter IX, \S 5]{Bourbaki}.

For the additive semigroup $S=\mathbb{R}_+^n$ \eqref{generalLaplace} turns into the classical Laplace transform. On the other hand, if $S$ is a locally compact  Abelian group \eqref{generalLaplace} turns into the Fourier transform on $S$.

But it is  easy to verify that, e.~g.,  the compact Abelian topological semigroup of idempotents  $\mathbb{I}_{\min}:=([0, 1], \min)$ with the semigroup operation $x\cdot y=\min(x,y)$ has the unique continuous semicharacter, namely, $\psi\equiv 1$.

This motivates the following more general definition.

\begin{definition}\label{df:LaplaseTrans}
We define the Laplace transform of a given Borel measure $\mu$ on a locally compact Abelian topological semigroup  $S$ by the formula \eqref{generalLaplace} where $\psi$  runs over such Borel measurable semicharactesrs of $S$ that  the wright-hand side exists.
 \end{definition} 

In the following by a semilattice we mean as usual  a commutative semigroup of idempotents. If in addition a semilattice is a topological semigroup, we call it  a topological semilattice (see, e.~g., \cite{Carruth}).

Professor Yu. I. Lyubich noted that the general form of inclusion and exclusion principle   (see \cite[p. 1049]{Graham}) can be interpreted as the converse of the Laplace transform of a function on a  semilattice $2^M$ of all subsets of a given finite set $M$ with set-theoretical  union as a semigroup operation \cite{Lyubich} and
 posed the problem of inverting the Laplace transform of  measures on  more general  topological semilattices  (oral communication). The purpose of this note is to provide partial solutions to this problem. Specifically, Theorems \ref{th:Laplace Inverse} and \ref{th:Laplace Inverse_Top} answer this question for discrete semilattices and for the class of topological semilattices containing subsemilattices of Boolean algebras. Since the resulting formulas differ significantly, we assume that the answer to Professor Yu. I. Lyubich's question depends significantly on the type of semilattices under consideration (similar to the fact that the inverse of the classical Laplace transform of a measure on $\mathbb{R}_+^n$ and the inverse of the classical Fourier transform of a measure on $\mathbb{R}^n$ differ significantly).

 \section{Laplace transform on  semilattices, the discrete case}

\subsection{Semicharacters of discrete semilattices}

According to  \cite[Theorem 2.7]{HZ}  any  finite or infinite, semilattice $G$ has a faithful  representation as a system of subsets of $G$ with the union as a semigroup operation. 

Let $\mathcal{S}=(\mathcal{S},\cup)$ be  a semilattice of subsets of a given set $M$ with the set-theoretical union as a semigroup operation, $\varnothing\in \mathcal{S}$. 

A semicharacter of $\mathcal{S}$ is any function  on $\mathcal{S}$ that satisfies the relations
$$
\psi(A\cup B)=\psi(A)\psi(B),\quad \psi(\varnothing)=1.
$$
It is obvious that every $\psi$ that is not identically $1$ takes the values $0$ and $1$ only.

The following proposition gives a full  description  of semicharacters of  $\mathcal{S}$.
 
 \begin{prop}\label{pr:semichar}
  Let $\mathcal{S}$ be some semilattice of finite subsets of $M$ with the set-theoretical union as a semigroup operation. Then  $\psi\in\widetilde{\mathcal{S}}$ if and only if  there is such $M_{\psi}\subseteq M$ (``the support set of $\psi$'') that 
$$
\psi(A)=
\begin{cases}
1, \mbox { if } A\subseteq M_\psi\\
0, \mbox { otherwise}
\end{cases}
\ \ (A\in\mathcal{S}).
$$
 \end{prop}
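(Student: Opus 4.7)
\medskip

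\textbf{Proof plan.} The plan is to prove both implications directly, with the easy direction checking the two defining identities \eqref{semichar}, and the harder direction using the crucial assumption that the members of $\mathcal{S}$ are \emph{finite} sets.

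First I would dispose of the sufficiency. Given $M_\psi\subseteq M$ and the candidate $\psi$ defined by the stated formula, the normalization $\psi(\varnothing)=1$ holds because $\varnothing\subseteq M_\psi$. For multiplicativity, I would split into cases on whether both $A$ and $B$ lie in $M_\psi$: if they do, then $A\cup B\subseteq M_\psi$ and both sides equal $1$; if at least one of them fails to be contained in $M_\psi$, then neither does $A\cup B$, and both sides equal $0$. Boundedness is obvious.

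For the necessity, let $\psi\in\widetilde{\mathcal{S}}$. The first step is to observe that every $A\in\mathcal{S}$ satisfies $A\cup A=A$, hence
\[
\psi(A)=\psi(A\cup A)=\psi(A)^2,
\]
so $\psi(A)\in\{0,1\}$. The second step is to define the candidate support set explicitly as
\[
M_\psi:=\bigcup\{C\in\mathcal{S}:\psi(C)=1\}\subseteq M.
\]
It is then immediate that $\psi(A)=1$ implies $A\subseteq M_\psi$. The main step is the converse: assuming $A\in\mathcal{S}$ with $A\subseteq M_\psi$, I must show $\psi(A)=1$. Here is where I would use finiteness: writing $A=\{a_1,\dots,a_n\}$, for each $a_i$ the definition of $M_\psi$ produces some $C_i\in\mathcal{S}$ with $a_i\in C_i$ and $\psi(C_i)=1$. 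Then $A\subseteq C_1\cup\dots\cup C_n$, so $A\cup C_1\cup\dots\cup C_n=C_1\cup\dots\cup C_n\in\mathcal{S}$, and applying multiplicativity in two ways yields
\[
\psi(A)\prod_{i=1}^n\psi(C_i)=\psi(A\cup C_1\cup\dots\cup C_n)=\prod_{i=1}^n\psi(C_i)=1,
\]
whence $\psi(A)=1$. Conversely, if $A\not\subseteq M_\psi$, then $A$ itself is not among the sets with $\psi(A)=1$, so $\psi(A)=0$.

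The only genuine obstacle is the converse inclusion step for necessity, and it is resolved precisely by the hypothesis that elements of $\mathcal{S}$ are finite: without it one could not reduce $A\subseteq M_\psi$ to a finite union of ``$\psi=1$'' sets from $\mathcal{S}$, and the argument would break down. Everything else is a routine consequence of idempotency and multiplicativity.
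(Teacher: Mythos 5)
Your proof is correct and follows essentially the same route as the paper: both define $M_\psi$ as the union of all sets on which $\psi$ equals $1$, use finiteness of $A$ to cover it by finitely many such sets whose union $B$ satisfies $B=A\cup B$, and then cancel via multiplicativity to get $\psi(A)=1$, with the complementary case handled by the $\{0,1\}$-valuedness coming from idempotency. No gaps; nothing further to add.
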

 
 \begin{proof} The ``if''  part can be verified by a straightforward  calculations. 

Now let   $\psi\in\widetilde{\mathcal{S}}$ and  
$$
\mathcal{S}(\psi):=\{B\in \mathcal{S}: \psi(B)=1\}.
$$ 
The family  $\mathcal{S}(\psi)$ is a subsemilattice of  $\mathcal{S}$ and its compliment  $\mathcal{S}\setminus\mathcal{S}(\psi)$  is an ideal of  $\mathcal{S}$. We put
$$
M_\psi:=\cup\{B: B\in \mathcal{S}(\psi)\}.
$$
 Two cases are possible for a set $A\in \mathcal{S}$.
 
 (a) $A\subseteq M_\psi$. Since $A$ is finite, there are such $B_i\in \mathcal{S}(\psi)$ that $A\subseteq  B$, where $B=\cup_{i=1}^nB_i\in \mathcal{S}(\psi)$. Now   $ B=A\cup B$ implies that $\psi(A)=1$.
 
 (b) $A$ is not a subset of $M_\psi$. Then  $A\in \mathcal{S}\setminus\mathcal{S}(\psi)$ and hence $\psi(A)=0$.
 \end{proof}
 
 In other wards this proposition states that the general form of semicharacters of $\mathcal{S}$ is
 $$
\psi_X(A)=
\begin{cases}
1, \mbox { if } A\subseteq X\\
0, \mbox { otherwise}
\end{cases}
\ \ (A\in\mathcal{S})
$$
 where $X$ is any nonempty subset of $M$. It is clear that $M_{\psi_X}\subseteq X$.

 \subsection{The inverse formula for the Laplace transform on discrete semilattices}

 In  this section, as above,  we endow $\mathcal{S}$ with the discrete topology. In this case, the general definition of the Laplace transform  of a measure  $\mu$ on $\mathcal{S}$ takes the form
 $$
 \widetilde{\mu}(\psi_X)=\mu(\mathcal{S}(\psi_X))
 $$
if the right-hand side makes sense.  
  
We shall consider measures of the following form. For a given function $\Phi:\mathcal{S}\to\mathbb{R}_+$  let
$$
\frak{R}:=\{\mathcal{A}\subseteq \mathcal{S}: \mathcal{A} \mbox{ consists of finite sets  and } \sum_{A\in \mathcal{A}}\Phi(A)<\infty\}.
$$
 Then $\frak{R}$ is a ring of subsets of $\mathcal{S}$. The map on  $\frak{R}$ defined as
\begin{equation}\label{mu}
 \mu_\Phi(\mathcal{A}):=\sum_{A\in \mathcal{A}}\Phi(A)
 \end{equation}
  is a positive $\sigma$-additive finite measure on $\mathcal{S}$. We define  by $\mu_\Phi$ the Lebesgue extension of this measure to the $\sigma$-ring $\frak{S}$ generated by  $\frak{R}$, too.

  For this measure and for every finite $X\subseteq M$ the Laplace transform is as follows
  \begin{equation}\label{Hamiltonian}
  \widetilde{\mu_\Phi}(\psi_X)=\mu_\Phi(\{A\in \mathcal{S}:A\subseteq X\})=\sum_{A\in \mathcal{S}, A\subseteq X}\Phi(A).
  \end{equation}
 
\begin{remark} Let the ring  $\frak{R}$ be hereditary, i.~e., if  $X\in \frak{R}$ then all subsets of $X$ belong to  $\frak{R}$, too. In this case, formula \eqref{Hamiltonian} can be rewritten as follows
\begin{equation}\label{Laplace for Phi}
  \widetilde{\Phi}(X)=\sum_{A\subseteq X}\Phi(A) \quad (X\in \frak{R}),
  \end{equation}
 and we get a definition of a Laplace transform of a function $\Phi$.
 \end{remark}

 \begin{example}\label{ex:02} Let $M=\mathbb{Z}^n$,   $\frak{R}$ consists of all finite subsets of $\mathbb{Z}^n$, and $\Phi$ takes its values in some $C^*$-algebra $\frak{A}$. In this case, the map $X\mapsto \widetilde{\Phi}(X)$ given by formula \eqref{Laplace for Phi} is called  in quantum statistical mechanics a Hamiltonian  (see, e.~g., \cite{BrRob, Ruelle}). Result of this section can be generalized for  such $\Phi$, as well.
  \end{example}
 
 Now we are in a position to state and prove the main result of this section (below $|A|$ stands  for the number of elements of a finite set $A$).

\begin{theorem}\label{th:Laplace Inverse}
 Let $\mathcal{S}$ be any semilattice  of   subsets of some nonempty set $M$ the
semigroup operation being set-theoretical union, $\varnothing\in \mathcal{S}$ and  $\Phi:\mathcal{S}\to \mathbb{R}_+$. Let  $f(X):= \widetilde{\mu_\Phi}(\psi_X)$ for every finite $X\subset M$.  Then for every $\mathcal{A}\in \frak{R}$
 \begin{equation}\label{discretL-1}
 \mu_\Phi(\mathcal{A})=\sum_{A\in \mathcal{A}}\sum_{X\subseteq A} (-1)^{|A|+|X|}f(X).
\end {equation}
\end{theorem}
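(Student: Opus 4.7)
The plan is to reduce \eqref{discretL-1} to a pointwise Möbius-inversion identity on the Boolean lattice $2^A$. Since $\mu_\Phi(\mathcal{A})=\sum_{A\in \mathcal{A}}\Phi(A)$ by \eqref{mu} and the right-hand side of \eqref{discretL-1} already splits term-by-term over $A\in \mathcal{A}$, it is enough to establish, for every finite $A\in \mathcal{S}$, the single-set identity
\[
\Phi(A)=\sum_{X\subseteq A}(-1)^{|A|+|X|}f(X).
\]
The outer sum in \eqref{discretL-1} is finite because $\mathcal{A}\in \frak{R}$, and the inner sum is finite because every $A\in \mathcal{A}$ is finite, so this reduction is clean.

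Next, I would substitute the explicit form $f(X)=\sum_{B\in \mathcal{S},\,B\subseteq X}\Phi(B)$ provided by \eqref{Hamiltonian}, which is itself a finite sum because $X$ is finite, and interchange the two summations:
\[
\sum_{X\subseteq A}(-1)^{|A|+|X|}f(X)=\sum_{\substack{B\in \mathcal{S}\\ B\subseteq A}}\Phi(B)\sum_{B\subseteq X\subseteq A}(-1)^{|A|+|X|}.
\]
Because all sums range over finite index sets, the interchange is trivially legitimate.

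To finish, I would evaluate the inner alternating sum. Writing $X=B\cup Y$ with $Y\subseteq A\setminus B$ gives a bijection between $\{X:B\subseteq X\subseteq A\}$ and $2^{A\setminus B}$, and $|X|=|B|+|Y|$, so
\[
\sum_{B\subseteq X\subseteq A}(-1)^{|A|+|X|}=(-1)^{|A|+|B|}\sum_{Y\subseteq A\setminus B}(-1)^{|Y|}=(-1)^{|A|+|B|}\sum_{k=0}^{n}\binom{n}{k}(-1)^k,
\]
where $n:=|A\setminus B|$. The last sum equals $(1-1)^n$, which is $1$ if $n=0$ (that is, $B=A$) and $0$ otherwise. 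Hence only the term $B=A$ survives and contributes $\Phi(A)$, as desired.

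No genuine obstacle is expected: the theorem is essentially classical Möbius inversion on the finite Boolean lattice $2^A$, and the finiteness of every $A\in \mathcal{A}\in \frak{R}$ rules out any convergence or order-of-summation subtlety. The only minor point to verify is that for each $X\subseteq A$ the quantity $f(X)=\widetilde{\mu_\Phi}(\psi_X)$ is indeed well-defined—this follows because $\{B\in \mathcal{S}:B\subseteq X\}$ is a finite subfamily of $\mathcal{S}$ and therefore lies in $\frak{R}$.
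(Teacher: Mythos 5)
Your proposal is correct and follows essentially the same route as the paper's proof: reduce to the single-set identity $\Phi(A)=\sum_{X\subseteq A}(-1)^{|A|+|X|}f(X)$, substitute the explicit form of $f$, interchange the finite sums, and evaluate the inner alternating sum via the substitution $X=B\cup Y$ and the binomial identity $(1-1)^n$. The only difference is cosmetic ordering — the paper isolates the alternating-sum evaluation as a preliminary identity before the main computation, whereas you carry it out at the end.
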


\begin{proof}  First note that for each finite $A \subseteq M$,  and $Y\subseteq A$ one has
\begin{equation}\label{sum}
\sum_{Y\subseteq X\subseteq A}(-1)^{|X|}=
\begin{cases}
0,  \mbox{ if } Y\ne A\\
(-1)^{|A|}, \mbox{ if }  Y= A,
\end{cases}
\end{equation}
because if $Y\ne A$ then
\begin{eqnarray*}
&\sum_{Y\subseteq X\subseteq A}(-1)^{|X|}\\
&=\sum_{X_1\subseteq A \setminus Y}(-1)^{|X_1|+|Y|}\\
&=(-1)^{|Y|}\sum_{k=0}^n(-1)^k{n\choose k}=0,
\end{eqnarray*}
where $n=|A\setminus Y|$, $k=|X_1|$.

Further, 
$$
  f(X)=\sum_{Y\in \mathcal{S}, Y\subseteq X}\Phi(Y).
  $$

Therefore,  we have for $A \in \mathcal{S}$ in view of \eqref{sum}
\begin{align*}
&(-1)^{|A|}\sum_{X \subseteq A} (-1)^{|X|} f(X)\\
&=(-1)^{|A|}\sum_{X \subseteq A} (-1)^{|X|}\sum_{Y\in \mathcal{S}, Y\subseteq X}\Phi(Y)\\
&=(-1)^{|A|}\sum_{(X,Y):Y\in \mathcal{S}, Y\subseteq X\subseteq A}(-1)^{|X|}\Phi(Y)\\
&=(-1)^{|A|}\sum_{Y\in \mathcal{S}, Y\subseteq A}\Phi(Y)\sum_{Y\subseteq X\subseteq A}(-1)^{|X|}\\
&=\Phi(A).
\end{align*}

It follows
\begin{align*}
 \mu_\Phi(\mathcal{A})&=\sum_{A\in \mathcal{A}}\Phi(A)\\
 &=\sum_{A\in \mathcal{A}}\sum_{X\subseteq A} (-1)^{|A|+|X|} f(X).
\end {align*}
   \end{proof}

\section{Laplace transform on  Lawson  semilattices of open-and-compact sets}

In this section,  $\mathcal{S}=(\mathcal{S},\cup)$ stands for a semilattice (with respect to  the  set-theoretical union) of open-and-compact subsets of a Hausdorff  topological space $M$.

\begin{example}\label{ex:2}
Recall that a topological space $M$ is called zero-dimensional if $M$ is a non-empty $T_1$-space and has a base $\mathcal{B}$ consisting of open-and-closed sets. 

According to the Stone representation theorem every Boolean algebra $(\mathbf{B},\cup,\cap,-)$  is isomorphic to the field of all open-and-compact subsets of a compact totally  disconnected space $T$ ``the Stone space of  $(\mathbf{B},\cup,\cap,-)$'' (see, e.~g., \cite{Sikorski}). The space $T$ being compact is zero-dimensional.  Therefore every subsemilattice $(S,\cup)$ of $(\mathbf{B},\cup,\cap,-)$ is isomorphic to some semilattice of  open-and-compact subsets of  the space $T$ with set-theoretical union as a semigroup operation.
\end{example}

\begin{example}\label{ex:1}
More generally let $M$ be compact and zero-dimensional space and the base  $\mathcal{B}$ of its topology consists of open-and-compact sets. Any subsemilattice of the  semilattice $\mathcal{K}$ of all compact subsets of $M$ with respect to the set-theoretic union generated by a subfamily of $\mathcal{B}$ consists of open-and-compact subsets of $M$. 
\end{example}

We introduce the topology in $\mathcal{S}$ as follows.  Let for an arbitrary sets $E, G_1, \dots, G_n$$\subseteq M$ let
\begin{align*}
\mathcal{V}(E; G_1, \dots, G_n)\\
&:=\{A\in \mathcal{S}:  A\cap E=\varnothing,  A\cap G_i\ne \varnothing, i=1,\dots,n\}.
\end{align*}
The open base $\frak{B}$ of the topology $\frak{T}$ in  $\mathcal{S}$ consists  of sets $\mathcal{V}(F; U_1, \dots, U_n)$ 
where $F$ runs over closed subsets of $M$, and all $U_i$ run over open-and-closed subsets of $M$. 

Recall that a topological semilattice $S$ is said to be
Lawson if it has a neighborhood basis of subsemilattices.
Equivalently, it is Lawson if the semilattice homomorphisms into
$\mathbb{I}_{\min}=([0, 1], \min)$ separate the points of $S$ \cite{Lawson1}  (see also\cite{Carruth}).

\begin{lemma}\label{Top_Sem} (i) The family $\frak{B}$ form a base of a Hausdorff  topology $\frak{T}$ on  $\mathcal{S}$.  

(ii) The space  $\mathcal{S}$ endowed with the topology $\frak{T}$ is  an
 Hausdorff  topological Lawson semilattice with respect to  the  set-theoretical union as the semigroup operation.  
\end{lemma}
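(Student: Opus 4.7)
The plan is to establish (i) and (ii) by direct verification, the one useful conceptual ingredient being that in a Hausdorff space every compact set is closed, so every element of $\mathcal{S}$ is automatically clopen. As a consequence, finite set-theoretic differences $A\setminus B$ of elements of $\mathcal{S}$ are themselves clopen subsets of $M$, which is precisely the type of set admitted as a ``$U_i$'' in the definition of $\frak{B}$.

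For (i), the base axioms reduce to simple bookkeeping. Covering: the choice $F=\varnothing$, $n=0$ gives $\mathcal{V}(\varnothing)=\mathcal{S}$. The intersection of two basic sets is again basic via
\[
\mathcal{V}(F_1;U_1,\dots,U_n)\cap\mathcal{V}(F_2;V_1,\dots,V_m)=\mathcal{V}(F_1\cup F_2;U_1,\dots,U_n,V_1,\dots,V_m),
\]
since finite unions of closed sets are closed. For the Hausdorff property, pick $A\neq B$ in $\mathcal{S}$ and, swapping names if necessary, a point $x\in A\setminus B$ (this covers all cases, including when one of $A, B$ is empty). By the observation above $U:=A\setminus B$ is clopen, so both $\mathcal{V}(\varnothing;U)$ and $\mathcal{V}(U)$ lie in $\frak{B}$; they separate $A$ (which meets $U$ at $x$) from $B$ (which is disjoint from $U$), and are manifestly disjoint in $\mathcal{S}$.

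For (ii), continuity of $\cup\colon\mathcal{S}\times\mathcal{S}\to\mathcal{S}$ at a pair $(A,B)$ with $A\cup B\in\mathcal{V}(F;U_1,\dots,U_n)$ is handled by a partition argument. Set $I_A:=\{i:A\cap U_i\neq\varnothing\}$ and $I_B:=\{i:B\cap U_i\neq\varnothing\}$; these subsets together cover $\{1,\dots,n\}$ because $A\cup B$ meets each $U_i$. Since $A$ and $B$ separately avoid $F$, the basic neighborhoods $\mathcal{V}(F;(U_i)_{i\in I_A})$ of $A$ and $\mathcal{V}(F;(U_i)_{i\in I_B})$ of $B$ map into $\mathcal{V}(F;U_1,\dots,U_n)$ under $\cup$, proving continuity.

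Finally, for the Lawson property I will show that each basic set $\mathcal{V}(F;U_1,\dots,U_n)$ is itself closed under $\cup$, hence a subsemilattice: if $C,D$ each avoid $F$ and each meet every $U_i$, then $C\cup D$ still avoids $F$ and meets every $U_i$ a fortiori. Thus $\frak{B}$ is a basis of subsemilattice neighborhoods, giving the Lawson condition. The only place where anything beyond routine verification occurs is the Hausdorff step, where one genuinely exploits both the Hausdorffness of $M$ (to force each element of $\mathcal{S}$ to be clopen) and the flexibility of choosing arbitrary clopen $U_i$'s in the base.
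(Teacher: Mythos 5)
Your proof is correct and follows essentially the same route as the paper's: the same intersection identity for basic sets, the same separation of $A$ and $B$ by neighborhoods built from the clopen set $A\setminus B$, the same index-set argument for continuity of $\cup$, and the same observation that each $\mathcal{V}(F;U_1,\dots,U_n)$ is a subsemilattice, which gives the Lawson property. The only cosmetic difference is that you allow basic sets with $n=0$ constraints (e.g.\ $\mathcal{V}(\varnothing)=\mathcal{S}$) where the paper instead inserts $M$ as a dummy $U_i$; your convention actually handles the edge case $B=\varnothing$ more cleanly.
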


\begin{proof} (i) Since $A\in \mathcal{V}(M\setminus A, M)$ for each $A\in \mathcal{S}$ and
\begin{align}\label{cap}
\mathcal{V}(F; U_1, \dots, U_n)&\cap \mathcal{V}(F'; U_1', \dots, U_n')\\ \nonumber
&=\mathcal{V}(F\cup F'; U_1, \dots, U_n,U_1', \dots, U_n'),
\end{align}
$\frak{B}$ is a base of a  topology on $\mathcal{S}$. This topology is Hausdorff, because for $A,B\in \mathcal{S}$ such that, say, $A\setminus B\ne \varnothing$ one has
$$
A\in \mathcal{V}(M\setminus A;A\setminus B), B\in \mathcal{V}(A\setminus B, B).
$$

(ii) To prove that the map  $(A,B)\mapsto A\cup B$ is continuous,  let $A\cup B\in \mathcal{V}(F; U_1, \dots, U_n)$, in other wards, $(A\cup B)\cap F =\varnothing$ and $(A\cup B)\cap U_k \ne \varnothing$ for all $k$. Let $I:=\{i:A\cap U_i\ne \varnothing\}$,  $J:=\{j:B\cap U_j\ne \varnothing\}$.  We put  $\mathcal{V}_1:=\mathcal{V}(F; U_i, i\in I)$ if $I\ne \varnothing$, and $\mathcal{V}_1:=\mathcal{V}(F;M)$ if $I= \varnothing$,  $\mathcal{V}_2:=\mathcal{V}(F; U_j, j\in J)$ if $J\ne \varnothing$, $\mathcal{V}_2:=\mathcal{V}(F;M)$ if $J= \varnothing$. Then  $\mathcal{V}_1$ is a neighbourhood of $A$, $\mathcal{V}_2$ is a neighbourhood of $B$.

If $A'\in \mathcal{V}_1$ and  $B'\in \mathcal{V}_2$ we have $A'\cup B'\in \mathcal{V}(F; U_1, \dots, U_n)$ which proves the continuity.

Since every family $\mathcal{V}(F; U_1, \dots, U_n)$ is  a semilattice  with respect to  the  set-theoretical union as the semigroup operation,  $\mathcal{S}$ is Lowson.
\end{proof}

\begin{remark}\label{locallyC} The so-called \it{myope topology} on the family $\mathcal{K}$  of all compact subsets of a locally compact space $M$ is defined by a base of sets $\mathcal{V}(F; U_1, \dots, U_n)$  where $F$ runs over closed subsets of $M$, and all $U_i$ run over open subsets of $M$ \cite[p. 268]{Berg}. It follows that our topology $\frak{T}$ is weaker that the topology induced on $\mathcal{S}$ by the myope topology. It is known that  $\mathcal{K}$ is locally compact in myope topology \cite[p. 269, remark 2.12]{Berg}. Then the space $(\mathcal{S}, \frak{T})$ is locally compact if $\mathcal{S}$ is an intersection of an open and closed subsets of $\mathcal{K}$.
\end{remark}

\subsection{Semicharacters of  semilattices of open-and-compact sets}

Recall that we assume that the semilattice $\mathcal{S}$ consists of open-and-compact subsets of a given  Hausdorff topological space $M$, and we write $\widetilde{\mathcal{S}}$ for the semigroup of all semicharacters of  $\mathcal{S}$.

\begin{prop}\label{pr:semichar_top} Every  semicharacter of  $\mathcal{S}$ is Borel measurable and has  exactly the form
\begin{align}\label{semicharacterTop}
\psi_U(A)=
\begin{cases}
1, \mbox { if } A\subseteq U\\
0, \mbox { otherwise}
\end{cases}
\ \ (A\in\mathcal{S})
\end{align}
 where $U$ is any nonempty open subset of $M$. 
\end{prop}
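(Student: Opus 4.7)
The plan is to mirror the strategy of Proposition \ref{pr:semichar} from the discrete case, with openness and compactness replacing finiteness at the key step, and then to add a measurability observation that drops out of the definition of the topology $\frak{T}$.

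First I would observe that, because every $A\in\mathcal{S}$ is idempotent, a semicharacter satisfies $\psi(A)=\psi(A)^2$, so $\psi$ takes values in $\{0,1\}$ only. For the ``if'' direction, given a nonempty open $U\subseteq M$, the formula \eqref{semicharacterTop} defines a semicharacter since $A\cup B\subseteq U$ iff both $A\subseteq U$ and $B\subseteq U$, and $\varnothing\subseteq U$ trivially. For measurability, the key point is that
$$
\psi_U^{-1}(1)=\{A\in \mathcal{S}: A\subseteq U\}=\{A\in \mathcal{S}: A\cap (M\setminus U)=\varnothing\}=\mathcal{V}(M\setminus U;M)\cap \mathcal{S},
$$
which belongs to the base $\frak{B}$ because $M\setminus U$ is closed. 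Hence $\psi_U$ is actually continuous (and $\psi_U^{-1}(0)$ is the complementary closed set), so a fortiori Borel measurable.

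For the ``only if'' direction, suppose $\psi\in\widetilde{\mathcal{S}}$. As in the proof of Proposition \ref{pr:semichar}, set $\mathcal{S}(\psi):=\{A\in\mathcal{S}:\psi(A)=1\}$; this is a subsemilattice of $\mathcal{S}$ (closed under finite unions), while $\mathcal{S}\setminus\mathcal{S}(\psi)$ is an ideal. Define
$$
U:=\bigcup\{B:B\in\mathcal{S}(\psi)\}.
$$
Since each $B\in\mathcal{S}(\psi)\subseteq\mathcal{S}$ is open in $M$, the set $U$ is open in $M$. I claim $\psi=\psi_U$, and consider the two cases for $A\in\mathcal{S}$ just as before.

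The step that genuinely requires the topological hypothesis is the case $A\subseteq U$: we no longer have finiteness of $A$ to fall back on, but $A$ is compact, and $\{B:B\in\mathcal{S}(\psi)\}$ is an open cover of $U\supseteq A$, so a finite subcover $B_1,\dots,B_n\in\mathcal{S}(\psi)$ of $A$ exists. Then $B:=B_1\cup\cdots\cup B_n$ lies in $\mathcal{S}(\psi)$ because $\mathcal{S}(\psi)$ is closed under finite unions, $A\subseteq B$ so $A\cup B=B$, and the semicharacter relation gives $\psi(A)\psi(B)=\psi(B)=1$, hence $\psi(A)=1$. In the remaining case $A\not\subseteq U$, the set $A$ cannot belong to $\mathcal{S}(\psi)$ (else $A\subseteq U$ by definition of $U$), so $\psi(A)=0$. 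The main (only) obstacle is precisely this open-cover/compactness argument, which is the topological substitute for the ``finite $A$ is a finite union of its singletons'' step used in the discrete proof.
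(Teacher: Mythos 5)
Your proof is correct and follows essentially the same route as the paper: the same set $\mathcal{S}(\psi)$, the same open union $U=\bigcup\{B:B\in\mathcal{S}(\psi)\}$, the same finite-subcover argument in place of finiteness, and the same identification of $\psi_U^{-1}(1)$ with the basic set $\mathcal{V}(M\setminus U; M)$ for measurability. One minor caveat: your parenthetical claim that $\psi_U$ is \emph{continuous} overshoots --- $\psi_U^{-1}(1)$ is open but $\psi_U^{-1}(0)$ is only closed, so $\psi_U$ is just the indicator of an open set (lower semicontinuous in general); Borel measurability, which is all the proposition asserts, does follow and is what the paper proves.
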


\begin{proof} Let $\psi$ be  a  semicharacter of  $\mathcal{S}$.  As in the proof of Proposition \ref{pr:semichar}  let $\mathcal{S}(\psi):=\{A\in \mathcal{S}: \psi(A)=1\}$ and consider the open set $U:=\cup\{A:A\in \mathcal{S}(\psi)\}$.  Two cases are possible for a set $K\in \mathcal{S}$.
 
 (a) $K\subseteq U$. Since $K$ is compact, there are such open  $B_i\in \mathcal{S}(\psi)$ that $K\subseteq  B$, where $B=\cup_{i=1}^n B_i\in \mathcal{S}(\psi)$. Now   $ B=K\cup B$ implies that $\psi(K)=1$.
 
 (b) $K$ is not a subset of $U$. Then  $K\in \mathcal{S}\setminus\mathcal{S}(\psi)$ and hence $\psi(K)=0$.

 Then $\psi=\psi_U$.
 
 Further, since both sets
 $$
 \{A\in \mathcal{S}: \psi(A)=1\}=\mathcal{V}(M\setminus U,M)
 $$
 and
 $$
 \{A\in \mathcal{S}: \psi(A)=0\}=\mathcal{S}\setminus\{A\in \mathcal{S}: \psi(A)=1\}
 $$
 are Borel in $\mathcal{S}$, $\psi$ is Borel measurable.

The  statement that  $\psi_U\in \widetilde{\mathcal{S}}$ can be verified  by a straightforward  calculations.
\end{proof}

 \subsection{The inverse formula for the Laplace transform on  semilattices of open-and-compact sets.}
 
 In the following we assume  that  $\mu$ is a  regular Borel measure on $\mathcal{S}$.

\begin{theorem}\label{th:Laplace Inverse_Top}
 Let $\mathcal{S}$ be as above. Let  $f(U):= \widetilde{\mu}(\psi_U)$  where $U$ is any nonempty open subset of $M$.   Then for every set $\mathcal{V}(F; U_1, \dots, U_n)$ in  the base $\frak{B}$ one has
 \begin{equation}\label{Top_L-1}
 \mu(\mathcal{V}(F; U_1, \dots, U_n))=(-1)^n(\Delta_{U_1}\circ\dots\circ\Delta_{U_n} f')(F),
\end {equation}
where $f'(F):=f(M\setminus F)$, and  $(\Delta_{U}\phi)(A)= \phi(A\cup U)-\phi(A)$  is a difference operator.

Moreover, the measure $\mu$ is uniquely determined by its values on $\frak{B}$ given by \eqref{Top_L-1}.
\end{theorem}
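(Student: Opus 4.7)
The plan is to reduce \eqref{Top_L-1} to a one-line inclusion--exclusion identity. By Proposition~\ref{pr:semichar_top}, $f(U)=\mu(\{A\in\mathcal{S}:A\subseteq U\})$, which gives
\begin{equation*}
f'(F)=\mu(\{A\in\mathcal{S}:A\cap F=\varnothing\}),
\end{equation*}
and more generally $f'\bigl(F\cup\bigcup_{i\in I}U_i\bigr)=\mu(G_I)$, where
\begin{equation*}
G_I:=\{A\in\mathcal{S}:A\cap F=\varnothing,\ A\cap U_i=\varnothing\ \forall\,i\in I\}.
\end{equation*}

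A routine induction on $n$ shows that the iterated difference operator expands as
\begin{equation*}
(\Delta_{U_1}\circ\cdots\circ\Delta_{U_n}f')(F)=\sum_{I\subseteq\{1,\dots,n\}}(-1)^{n-|I|}\,f'\Bigl(F\cup\bigcup_{i\in I}U_i\Bigr),
\end{equation*}
so, after multiplying by $(-1)^n$, the right-hand side of \eqref{Top_L-1} equals $\sum_{I}(-1)^{|I|}\mu(G_I)$. Setting $G:=G_\varnothing$ and $E_i:=G_{\{i\}}$, we have $\mathcal{V}(F;U_1,\dots,U_n)=G\setminus\bigcup_{i=1}^n E_i$ and $G_I=\bigcap_{i\in I}E_i$ (with the convention $G_\varnothing=G$). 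The classical inclusion--exclusion formula
\begin{equation*}
\mu\Bigl(G\setminus\bigcup_{i=1}^n E_i\Bigr)=\sum_{I\subseteq\{1,\dots,n\}}(-1)^{|I|}\mu\Bigl(\bigcap_{i\in I}E_i\Bigr)
\end{equation*}
then matches the previous sum term-by-term and yields \eqref{Top_L-1}.

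For the uniqueness assertion, by \eqref{cap} the base $\frak{B}$ is closed under finite intersections, hence is a $\pi$-system generating the Borel $\sigma$-algebra of $(\mathcal{S},\frak{T})$. Any regular Borel measure is therefore determined by its values on $\frak{B}$ via Dynkin's $\pi$-$\lambda$ theorem (using regularity of $\mu$ together with, if needed, an exhaustion by sets of finite measure to reduce to the finite case).

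The main obstacle I foresee is purely bookkeeping: keeping the combinatorial signs and index conventions straight when unfolding $(\Delta_{U_1}\circ\cdots\circ\Delta_{U_n})$ and lining them up against the inclusion--exclusion expansion. The measure-theoretic content is otherwise routine once $f'(F)$ is identified with the $\mu$-measure of $\{A:A\cap F=\varnothing\}$.
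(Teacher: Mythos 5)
Your proof of the identity \eqref{Top_L-1} is correct and amounts to essentially the same computation as the paper's, organized differently: the paper peels off one $\Delta_{U_i}$ at a time by induction on $n$, using $f'(F)=\mu(\mathcal{S}^F)$ with $\mathcal{S}^F:=\{A\in\mathcal{S}:A\cap F=\varnothing\}$ and the set identity $(A\setminus C)\setminus(B\setminus C)=A\setminus(B\cup C)$, whereas you unroll the iterated difference operator into its closed-form alternating sum over $I\subseteq\{1,\dots,n\}$ and match it term-by-term against the inclusion--exclusion expansion of $\mu(G\setminus\bigcup_{i}E_i)$. Both arguments rest on the same identification $f'\bigl(F\cup\bigcup_{i\in I}U_i\bigr)=\mu(G_I)$ and on the (implicit, also in the paper) finiteness of $\mu(G)$, which is guaranteed because $\widetilde{\mu}(\psi_{M\setminus F})$ is assumed to exist. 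Making the inclusion--exclusion structure explicit is a legitimate and arguably cleaner presentation of the same idea.

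The uniqueness argument, however, has a genuine gap. You invoke Dynkin's $\pi$--$\lambda$ theorem on the grounds that $\frak{B}$ ``generates the Borel $\sigma$-algebra of $(\mathcal{S},\frak{T})$.'' A base of a topology generates the Borel $\sigma$-algebra only under countability hypotheses (e.g.\ second countability, or more generally when every open set is a countable union of base elements); no such hypothesis is available here, since $M$ is an arbitrary Hausdorff space and $\frak{B}$ is indexed by all closed $F$ and all finite tuples of open-and-closed $U_i$. In general $\sigma(\frak{B})$ may be strictly smaller than the Borel $\sigma$-algebra of $(\mathcal{S},\frak{T})$, and then agreement of two measures on $\sigma(\frak{B})$ does not determine them on all Borel sets. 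This is precisely why the paper argues differently: from agreement on $\frak{B}$ it passes, via the $\pi$-system property \eqref{cap} and inclusion--exclusion, to agreement on the family $\frak{T}_f$ of finite unions of base elements; it then uses outer regularity together with compactness (every open set containing a compact $\mathcal{K}\subseteq\mathcal{S}$ contains a finite union of base elements still covering $\mathcal{K}$) to conclude $\mu_1=\mu_2$ on compact sets, and finally inner regularity to conclude equality everywhere. Your remark about exhaustion addresses $\sigma$-finiteness but not this generation issue; to repair the argument you would need either a countability assumption on the topology $\frak{T}$ or to switch to the regularity-and-compactness route.
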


\begin{proof} Let
$$
\mathcal{S}^K:=\{A\in \mathcal{S}: A\cap K=\varnothing\}.
$$
Then 
$$
\mathcal{S}^F\setminus \cup_{i=1}^n\mathcal{S}^{U_i}=\mathcal{V}(F; U_1, \dots, U_n).
$$
We prove \eqref{Top_L-1} by induction. 
Since 
\begin{equation*}%\label{F(K)}
f(M\setminus F)= \int_{\mathcal{S}}\psi_{M\setminus F}(A)d\mu(A)=\mu(\mathcal{S}(\psi_{M\setminus F}))=\mu(\mathcal{S}^F),
\end {equation*}
for $n=1$ we have
\begin{align*}
-(\Delta_{U_1}f')(F)&=f(M\setminus F)-f(M \setminus (F\cup U_1))\\
&=\mu(\mathcal{S}^F)-\mu(\mathcal{S}^{F\cup U_1})\\
&=\mu(\mathcal{S}^F\setminus\mathcal{S}^{F\cup U_1})\\
&=\mu(\mathcal{V}(F; U_1)).
\end{align*}
Now  assume that \eqref{Top_L-1} holds for some $n$. Then
\begin{align*}
&(-1)^{n+1}(\Delta_{U_1}\circ\dots\circ\Delta_{U_{n+1}} f')(F)\\
&=(-1)^{n+1}\Delta_{U_1}\circ(\Delta_{U_2}\circ\dots\circ\Delta_{U_{n+1}} f')(F)\\
&=(-1)^{n+1}((\Delta_{U_2}\circ\dots\circ\Delta_{U_{n+1}} f')(F\cup U_1) - (\Delta_{U_2}\dots\circ\Delta_{U_{n+1}} f')(F))\\
&=(-1)^{n}(\Delta_{U_2}\dots\circ\Delta_{U_{n+1}} f')(F)-(-1)^{n}(\Delta_{U_2}\dots\circ\Delta_{U_{n+1}} f')(F\cup U_1)\\
&=\mu((\mathcal{S}^F\setminus \cup_{i=2}^{n+1}\mathcal{S}^{U_i})-\mu(\mathcal{S}^{F\cup U_1}\setminus \cup_{i=2}^{n+1}\mathcal{S}^{U_i})\\
&=\mu((\mathcal{S}^F\setminus \cup_{i=2}^{n+1}\mathcal{S}^{U_i})\setminus (\mathcal{S}^{F\cup U_1}\setminus \cup_{i=2}^{n+1}\mathcal{S}^{U_i}))\\
&=\mu(\mathcal{V}(F; U_1,U_2,\dots,U_{n+1}))
\end{align*}
(above we used the equality  $(A\setminus C)\setminus (B\setminus C)= A\setminus(B\cup C)$).
This proves   the first stement.

Further, let    regular Borel measures $\mu_1$ and $\mu_2$   on $\mathcal{S}$ coincide on $\frak{B}$. Since,  by \eqref{cap}, $\frak{B}$ is closed with respect to finite intersections, the inclusion-and-exclusion rule implies that   $\mu_1$ and $\mu_2$ coincide on  finite unions of elements of $\frak{B}$. If we denote by  $\frak{T}_f$ the family of such finite unions, then for every compact $\mathcal{K}\subseteq \mathcal{S}$ we have by the outer regularity of $\mu_i$
\begin{align*}
\mu_i(\mathcal{K})&=\inf\{\mu_i(\mathcal{U}): \mathcal{K} \subseteq \mathcal{U}, \mathcal{U} \in \frak{T}\}\\
&=\inf\{\mu_i(\mathcal{W}): \mathcal{W}  \in  \frak{T}_f\}\ \ (i=1,2).
\end{align*}
Thus (inner regular) measures $\mu_1$ and $\mu_2$ coincide on compact subsets of $\mathcal{S}$, as well.

This completes the proof.
\end{proof}

\begin{comment}

\medskip

%\section*{Acknowledgements}

%The author is indebted to the referee for thorough reading and valuable remarks.

\medskip

\section*{Funding}

 This work is supported in part by the State Program of Scientific Research of Republic of Belarus,
project no. 20211776; and by the Ministry of Education and Science of Russia, agreement no. 075-02-2025-1720.

\medskip

\section*{Competing Interests}

The authors have no relevant financial or non-financial interests to disclose.

\medskip

\section*{Data availability}

This manuscript has no associated data.

\bigskip
\end{comment}

\end{document}